\newcommand{\Gi}{\mathrm{\Gamma_{int}}}
\newcommand{\Gb}{\mathrm{\Gamma_{bottom}}}
\newcommand{\Gl}{\mathrm{\Gamma_{left}}}
\newcommand{\Gr}{\mathrm{\Gamma_{right}}}
\newcommand{\Gt}{\mathrm{\Gamma_{top}}}
\newcommand{\Go}{\mathrm{\Gamma_{out}}}
\def\norm#1{\|#1\|} 
\def\nd#1{\frac{\partial #1}{\partial n}} 
\def\td#1{\frac{\partial #1}{\partial t}} 
\def\intdxdt#1{\int_{0}^{T} \int_{\Omega} #1 \hspace{.5mm}dx\hspace{.5mm}dt}
\def\intdtdx#1{\int_{\Omega} \int_{0}^{T} #1 \hspace{.5mm}dt\hspace{.5mm}dx}
\def\intdx#1{\int_{\Omega} #1 \hspace{1mm}dx}
\def\intdsidt#1{\int_{0}^{T}\int_{\Gi} #1 \hspace{.5mm}ds\hspace{.5mm}dt}
\def\intdtdsi#1{\int_{\Gi} \int_{0}^{T}#1 \hspace{.5mm}dt\hspace{.5mm}ds}
\def\intdsodt#1{\int_{0}^{T} \int_{\Go} #1 \hspace{.5mm}ds\hspace{.5mm}dt}
\def\intdtdso#1{\int_{\Go} \int_{0}^{T} #1 \hspace{.5mm}dt\hspace{.5mm}ds}
\def\intdt#1{\int_{0}^{T} #1 \hspace{.5mm}dt}
\def\grad{\mbox{grad}}
\newcommand{\LL}{{\mathscr{L}}}
\def\bei#1{\vrule width 0.4pt height 14pt depth 9pt
           \lower 8pt \hbox{$ _{\hbox{} #1}$}\!\!\!}
\def\exp{\mathrm{exp}}
\newcommand{\N}{{\mathbbm{N}}} 
\newcommand{\R}{{\mathbbm{R}}} 
\def\moverlay{\mathpalette\mov@rlay}
\def\mov@rlay#1#2{\leavevmode\vtop{%
   \baselineskip\z@skip \lineskiplimit-\maxdimen
   \ialign{\hfil$\m@th#1##$\hfil\cr#2\crcr}}}
\newcommand{\charfusion}[3][\mathord]{
    #1{\ifx#1\mathop\vphantom{#2}\fi
        \mathpalette\mov@rlay{#2\cr#3}
      }
    \ifx#1\mathop\expandafter\displaylimits\fi}
\newcommand{\cupdot}{\charfusion[\mathbin]{\cup}{\cdot}}
\def\scp#1{\left\langle #1\right\rangle}
\def\norm#1{\left\| #1\right\|}
\def\CROP#1{}
\newtheorem{remark}{Remark}
\title{Structured inverse modeling in parabolic diffusion problems
}
\author{Volker H.~Schulz, Martin Siebenborn and Kathrin Welker\thanks{University of Trier, Department of Mathematics, 54296 Trier, Germany ({\tt volker.schulz@uni-trier.de, siebenborn@uni-trier.de, welker@uni-trier.de}).}
        }
\newenvironment{rev} {\color{black}} {\color{black}}
\begin{document}

\maketitle

\begin{abstract}
Often, the unknown diffusivity in diffusive processes is structured by piecewise constant patches. This paper is devoted to efficient methods for the determination of such structured diffusion parameters by exploiting shape calculus. A novel shape gradient is derived in parabolic processes. Furthermore quasi-Newton techniques are used in order to accelerate shape gradient based iterations in shape space. Numerical investigations support the theoretical results.
\end{abstract}

\begin{keywords} 
Inverse modeling, shape optimization, optimization on shape manifolds.
\end{keywords}


\pagestyle{myheadings}
\thispagestyle{plain}

\section{Introduction}
Inverse modeling in diffusive processes is one of the major themes in the field of inverse problems. 
\begin{rev} 
Inverse problems were already tackled for example in \cite{ColtonKress,HettlichRundell}.\end{rev} 
Often, a distributed diffusivity parameter is to be estimated from observations of the diffused state, as in \cite{McLaughlin1996,SchulzVxX1999s,SchulzVxH1997c}. In many cases, however, the rough overall structure of the parameter distribution is known, but the details are missing. In the present paper, we assume that the distributed diffusion parameter to be estimated is piecewise constant in subdomains with smooth boundaries. The detailed shape of the subdomains is to be estimated. Thus, we elaborate on a very similar setting as in \cite{harbrecht-2014}. The difference is that in \cite{harbrecht-2014} the source term is assumed being piecewise constant, whereas here the diffusion parameter is assumed piecewise constant. Furthermore, a novel quasi-Newton approach in shape space is presented and  convergence properties are observed, which are superlinear as long as the increments are larger than the discretization error. 
\begin{rev}
Newton-type methods have been used in shape optimization since many years, e.g. \cite{Epp-Har-2005,NovruziRoche}.\end{rev}
Quasi-Newton methods on general manifolds have already been discussed in \cite{Absil-book-2008,GabayDxX1982b,Ring-Wirth-2012}. Here, we specify them for the particular case of shape manifolds. From a different standpoint, the discussion in this paper can be viewed as a  generalization of the elliptic structured inverse modeling in the publications  \cite{ItoKunisch,Paganini} to the parabolic case.
\begin{rev}
The methodology and algorithm derived in this paper applies for example to the problem of inversely determining cell shapes in the human skin as investigated in \cite{neagel20015scalable}.\end{rev}

The paper is organized in the following way. 
In section 2, we derive the shape derivative for the parabolic inverse problem. Section 3 presents a limited memory BFGS quasi-Newton technique in shape space and discusses the theoretical background from optimization on Riemannian manifolds. Finally, section 4 discusses numerical results for the inverse problem of finding the interfaces of two subdomains. 

\section{Interface problem formulation and derivation of the shape derivative}\label{sec3}
We first set up notation and terminology. Then we formulate the parabolic interface problem which is motivated by electrical impedance tomography. In the third part of this section we deduce the shape derivative which is achieved by an application of the theorem of Correa and Seger \cite[theorem 2.1]{CorreaSeger} and a generalization of the approach in \cite{Paganini} for parabolic problems.
\subsection{Notations and definitions}\label{notations_definitions}
Let $d\in\N$ and $\tau>0$. We will denote by $\Omega\subset \R^d$ a bounded domain with Lipschitz boundary $\Gamma:=\partial\Omega$ and by $J$ a real-valued functional depending on it.
Moreover, let $\{F_t\}_{t\in[0,\tau]}$ be a family of bijective mappings $F_t\colon \Omega\to\R^d$ such that $F_0=id$.
This family transforms the domain $\Omega$ into new perturbed domains $\Omega_t:=F_t(\Omega)=\{F_t(x)\colon x\in \Omega\}$ with $\Omega_0=\Omega$ and the boundary $\Gamma$ into new perturbed boundaries $\Gamma_t:=F_t(\Gamma)=\{F_t(x)\colon x\in \Gamma\}$ with $\Gamma_0=\Gamma$. If you consider the domain $\Omega$ as a collection of material particles which are changing their position in the time-interval $[0,\tau]$, then the family $\{F_t\}_{t\in[0,\tau]}$ describes the motion of each particle, i.e., at the time $t\in [0,\tau]$ a material particle $x\in\Omega$ has the new position $x_t:=F_t(x)\in\Omega_t$ with $x_0=x$.
The motion of each such particle $x$ could be described by the \emph{velocity method}, i.e., as the flow $F_t(x):=\xi(t,x)$ determined by the initial value problem
\begin{equation}
\begin{split}
\frac{d\xi(t,x)}{dt}&=V(\xi(t,x))\\
\xi(0,x)&=x
\end{split}
\end{equation}
or by the \emph{perturbation of identity} which is defined by $F_t(x):=x+tV(x)$ where $V$ denotes a sufficiently smooth vector field. We will use the perturbation of identity throughout the paper. The \emph{Eulerian derivative} of $J$ at $\Omega$ in direction $V$ is defined by
\begin{equation}
\label{eulerian}
DJ(\Omega)[V]:= \lim\limits_{t\to 0^+}\frac{J(\Omega_t)-J(\Omega)}{t}.
\end{equation}
The expression $DJ(\Omega)[V]$ is called the \emph{shape derivative} of $J$ at $\Omega$ in direction $V$ and $J$ \emph{shape differentiable} at $\Omega$ if for all directions $V$ the Eulerian derivative (\ref{eulerian}) exists and the mapping $V\mapsto DJ(\Omega)[V]$ is linear and continuous. The \emph{material derivative} of a generic function $p\colon \Omega_t\to \R$ at $x\in\Omega$ with respect to the deformation $F_t$ is given by
\begin{equation}
\label{material}
D_mp(x):=\lim\limits_{t\to 0^+}\frac{\left(p\circ F_t\right)(x)-p(x)}{t}=\frac{d^+}{dt}\left(p\circ F_t\right)(x)\,\rule[-2.5mm]{.1mm}{6mm}_{\hspace{1mm}t=0}
\end{equation}
and its \emph{shape derivative} with respect to the vector field $V$ by
\begin{equation}
\label{shape_der}
Dp[V]:=D_mp-V^T\nabla p.
\end{equation}
In the following, we will also use the symbol $\dot{p}$ to denote the material derivative of $p$.
Let $p,q\colon \Omega_t\to\R$ be two generic functions and $D_m$ the material derivative with respect to $F_t=id+tV$. The following rules for the material will be needed in subsection \ref{der_shape_der}. For the material derivative the product rule holds, i.e.,
\begin{equation}
\label{product_rule}
D_m(p\hspace{.7mm}q)=D_mp\hspace{.7mm}q+p\hspace{.7mm}D_mq.
\end{equation}
While the shape derivative commutes with the gradient, the material derivative does not, but the following equality was proved in \cite{Berggren}
\begin{equation}
\label{material_grad}
D_m\nabla p=\nabla D_mp-\nabla V^T\nabla p.
\end{equation}
Combining (\ref{product_rule}) with (\ref{material_grad}) yields
\begin{equation}
\label{material_grad_grad}
D_m\left(\nabla q^T\nabla p\right)=\nabla D_mp^T\nabla q-\nabla q^T\left(\nabla V+\nabla V^T\right)\nabla p+\nabla p^T\nabla D_mq.
\end{equation}
Moreover, in subsection \ref{der_shape_der} we need the following rule for differentiating domain integrals
\begin{equation}
\label{der_domain_int}
\frac{d^+}{dt}\left(\int_{\Omega_t}f(t)\right)\,\rule[-4mm]{.1mm}{9mm}_{\hspace{1mm}t=0}=\int_{\Omega}\left(D_mf+\mathrm{div}(V)f\right)
\end{equation}
which was proved in \cite[lemma 3.3]{HaslingerMakinen}.

\subsection{Interface problem formulation}
In the previous subsection we denoted by $\Omega$ a bounded domain of $\R^d$ with Lipschitz boundary $\partial\Omega$. Now, let this domain $\Omega$ be an open subset of $\R^2$ and split into the two disjoint subdomains $\Omega_1,\Omega_2\subset\Omega$ such that $\Omega_1\cupdot\Gi\cupdot\Omega_2=\Omega$, $\Gb\cupdot\Gl\cupdot\Gr\cupdot\Gt=\partial\Omega\hspace{1mm}(=:\Go)$ and $\partial\Omega_1\cap\partial\Omega_2=\Gi$ where the interior boundary $\Gi$ is assumed to be smooth and variable and the outer boundary $\Go$ Lipschitz and fixed. An example of such a domain is illustrated in figure \ref{fig_Omega}.
\begin{figure}[h]
\vspace*{.6cm}
\begin{center}
   \begin{overpic}[width=.5\textwidth]{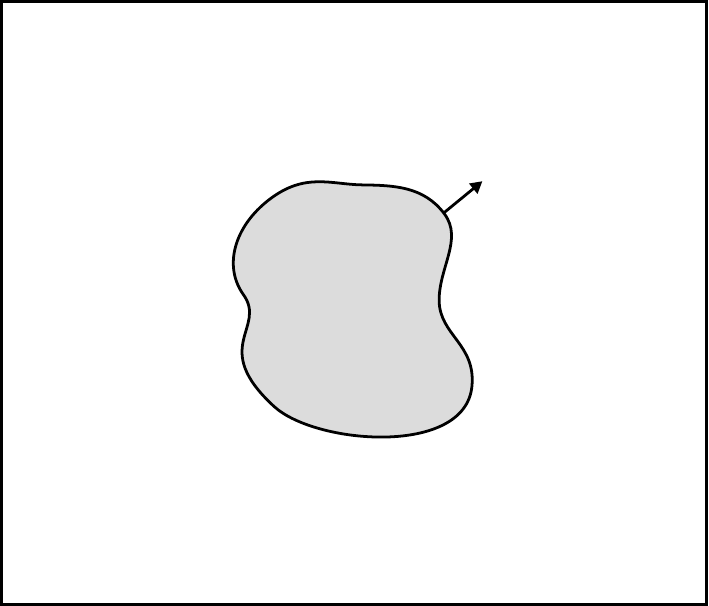}
   \put(42,40){$\Omega_2$}
   \put(20,13){$\Omega_1$}
   \put(44,88){$\Gt $}
   \put(43,61.5){$\Gi $}
   \put(-11,42){$\Gl $}
   \put(101,42){$\Gr $}
   \put(44,-6){$\Gb $}
   \put(65.5,54){$n$}
   \end{overpic}
   \end{center}
   \vspace*{.4cm}
   \caption{Example of a domain $\Omega=\Omega_1\cupdot\Gi\cupdot\Omega_2$ where $\Go:=\partial\Omega=\Gb\cupdot\Gl\cupdot\Gr\cupdot\Gt$ and $n$ denotes the unit outer normal to $\Omega_2$ at $\Gi$}
   \label{fig_Omega}
\end{figure}

The parabolic PDE constrained shape optimization problem is given in strong form by
\begin{align}\label{oc1}
\min \hspace{0,1cm} J(\Omega):=\intdxdt{&(y-\bar{y})^2}+\mu\int_{\Gi}1\hspace{.5mm}ds\\
\label{oc2}
\mbox{s.t. } \td{y}- \mathrm{div}(k\nabla y)&=f\quad \text{in }\Omega\times(0,T]
\\
\label{oc3}
\hspace{20mm}y&=1\quad \text{on }\Gt\times(0,T]
\\
\label{oc4}
\nd{y}&=0\quad \text{on }(\Gb\cup\Gl\cup\Gr)\times(0,T]
\\
\label{oc5}
y&=y_0\quad\text{in }\Omega\times\{0\}
\end{align}
where
\begin{equation*}
k\equiv\begin{cases}
k_1 = \mathrm{const.}\quad\text{ in }\Omega_1\times(0,T]\\
k_2 = \mathrm{const.} \quad\text{ in }\Omega_2\times(0,T]
\end{cases}
\end{equation*}
and $n$ denotes the unit outer normal to $\Omega_2$ at $\Gi$. Of course, the formulation (\ref{oc2}) of the differential equation is to be understood only formally because of the jumping coefficient $k$. We observe that the unit outer normal to $\Omega_1$ is equal to $-n$, which enables us to use only one normal $n$ for the subsequent discussions. Furthermore, we have interface conditions at the interface $\Gi$. We formulate explicitly the continuity of the state and of the flux at the boundary as
\begin{equation}
\label{oc6}
\left\llbracket y\right\rrbracket =0\, ,\quad \left\llbracket k\nd{y}\right\rrbracket =0\quad\text{on }\Gi\times(0,T]
\end{equation}
where the jump symbol $\llbracket\cdot\rrbracket$ denotes the discontinuity across the interface $\Gi$ and is defined by $\llbracket v \rrbracket := v_1-v_2$ 
where $v_1:=v\,\rule[-2mm]{.1mm}{4mm}_{\hspace{.5mm}\Omega_1}$ and $v_2:=v\,\rule[-2mm]{.1mm}{4mm}_{\hspace{.5mm}\Omega_2}$. The perimeter regularization with $\mu>0$ in the objective (\ref{oc1}) is a frequently used \begin{rev} in this kind of problems. In \cite{Sturm2013} a weaker but more complicated regularization is instrumental in order to show existence of solutions. \end{rev}
 
\begin{rev}
It is often important to identify functions $\Omega \times [0,T]\to \R$ with maps from $[0,T]$ into a Banach space. In doing so, we now use the space $L^2\left(0,T;H^1(\Omega)\right)$ which consists of $L^2$-integrable functions $u\colon [0,T]\to H^1(\Omega)$ such that $u(t)\in H^1(\Omega)$ for all $t\in[0,T]$. Moreover, as in \cite{Troeltzsch} we now use a weak time derivative $u_t$ for $u\in L^2\left(0,T;H^1(\Omega)\right)$ by the following condition
\begin{equation}
\int_{0}^{T} \phi(t) u_t(t)dt =-\int_{0}^{T}  \phi'(t) u(t) dt, \,\ \forall\phi\in C_0^\infty(0,T)
\end{equation}
where $\phi'=\frac{d\phi}{dt}$.
In the following, we assume that $u\in L^2\left(0,T;H^1(\Omega)\right)$ has a weak time derivative $u_t\in L^2\left(0,T;H^{-1}(\Omega)\right)$ where $H^{-1}(\Omega)$ denotes the dual space of $H^1(\Omega)$. 
\begin{remark}
Let $H$ be a Hilbert space. A weak time derivative of a function $u\in L^2\left(0,T;H\right)$ can be defined in the same space $L^2\left(0,T;H\right)$. However, in our application, we use the Gelfand triple $H^1(\Omega)\hookrightarrow L^2(\Omega)\hookrightarrow H^{-1}(\Omega)$ and it turns out to be more appropriate to define the weak time derivative in the space $L^2\left(0,T;H^{-1}(\Omega)\right)$ instead of $L^2\left(0,T;H^1(\Omega)\right)$ (cf. \cite{GrossReusken,Troeltzsch}).
\end{remark}
\end{rev} 

In our setting, the boundary value problem (\ref{oc2}-\ref{oc6}) is written in weak form as
\begin{equation}
\label{wf}
a(y,p)=b(p,p_1,p_2)\, , \ \forall p,p_1,p_2\in \begin{rev}
W\left(0,T;H^1(\Omega)\right)
\end{rev}
\end{equation}
\begin{rev}
where the space $W\left(0,T;H^1(\Omega)\right)$ is defined by
\begin{equation}
W\left(0,T;H^1(\Omega)\right)=\{u\in L^2(0,T;H^1(\Omega))\colon u_t\in L^2\left(0,T;H^{-1}(\Omega)\right)\text{ exists}\}.
\end{equation}
For properties of the space $W\left(0,T;H^1(\Omega)\right)$ we refer the reader to the literature, e.g. \cite{GrossReusken,Troeltzsch}.
\end{rev}
The bilinear form $a(y,p)$ in (\ref{wf}) is given by
\begin{align}
a(y,p)\hspace{.5mm} &:=\intdx{y(T,x)\hspace{.5mm}p(T,x)}-\intdx{y_0\hspace{.5mm}p(0,x)}-\intdxdt{\td{p}y}\nonumber\\
& \hspace*{.5cm}+ \intdxdt{k\nabla y^T\nabla p}-\intdsidt{\left\llbracket k\nd{y}p\right\rrbracket}\nonumber\\
&\hspace*{.5cm}-\intdsodt{k_1\nd{y}p}\label{wfa2}
\end{align}
and the linear form $b(p,p_1,p_2)$ in (\ref{wf}) by
\begin{equation}
b(p,p_1,p_2)\hspace{.5mm}:=b_1(p)+b_2(p_1,p_2)
\label{wfb}
\end{equation}
where
\begin{align}
b_1(p)&:=\intdxdt{fp}\label{b1} \\
b_2(p_1,p_2)&:=\intdt{\int_{\Gt}p_1(y-1)\hspace{.5mm}ds}+\intdt{\int_{\Go\setminus\Gt}p_2\nd{y}\hspace{.5mm}ds}.\label{b2}
\end{align}
\begin{rev} We assume for the obervation $\bar{y}\in W\left(0,T;H^1(\Omega)\right)$, which guarantees also ${y},p\in W\left(0,T;H^1(\Omega)\right)$.\end{rev}
The Lagrangian of (\ref{oc1}-\ref{oc6}) is defined as
\begin{equation}
\LL(\Omega,y,p):=J(\Omega)+a(y,p)-b(p,p_1,p_2)
\label{lagrangian}
\end{equation}
where $J(\Omega)$ is defined in (\ref{oc1}), $a(y,p)$ in (\ref{wfa2}) and $b(p,p_1,p_2)$ in (\ref{wfb}-\ref{b2}).
\begin{remark}
Integration by parts on the integral $\intdxdt{\td{y}p}$ yields
\begin{align*}
&\intdxdt{\td{y}p}\\
&=\intdx{y(T,x)\hspace{.5mm}p(T,x)}-\intdx{y_0\hspace{.5mm}p(0,x)}-\intdxdt{\td{p}y}
\end{align*}
\end{remark}
\begin{remark}
\label{wfa1yp}
Note that we have to consider
\begin{align}
a(y,p)\hspace{.5mm} &:=\intdx{y(T,x)\hspace{.5mm}p(T,x)}-\intdx{y_0\hspace{.5mm}p(0,x)}-\intdxdt{\td{p}y}\nonumber\\
& \hspace*{.5cm}-\intdxdt{\mathrm{div}(k\nabla p) y}+ \intdsidt{\left\llbracket k\left(\nd{p}y-\nd{y}p\right)\right\rrbracket}\nonumber\\
&\hspace*{.5cm}+\intdsodt{k_1\left(\nd{p}y-\nd{y}p\right)}\label{wfa1y}
\end{align}
or respectively
\begin{equation}
a(y,p)\hspace{.5mm} :=\intdxdt{\td{y}p}- \intdxdt{\mathrm{div}(k\nabla y) p}\label{wfa1p}
\end{equation}
instead of (\ref{wfa2}) in order to derive the bilinear form $a(y,p)$ or the Lagrangian $\LL(\Omega,y,p)$ in terms of $y$ or respectively $p$.
\end{remark}

\subsection{Derivation of the shape derivative}
\label{der_shape_der}

In this subsection we first consider the objective (\ref{oc1}) without the perimeter regularization. Then the shape derivative can be expressed as an integral over the domain $\Omega$, as well as an integral over the interface $\Gi$. By the Hadamard structure theorem \cite[theorem 2.27]{SokoZol} only the normal part of a vector field V on the interface has an impact on the value of the shape derivative $D\LL(\Omega,y,p)[V]$ or $DJ(\Omega)[V]$. In this subsection we first deduce the domain integral by an application of the theorem of Correa and Seger \cite[theorem 2.1]{CorreaSeger}. Then we convert it in an interface integral by means of integration by parts on $\Gi$.

\begin{remark}
The shape derivative in an open domain will only depend on the normal component of a vector field on the boundary, if the boundary is smooth enough. One should note that this is no longer true, if the boundary is only piecewise smooth.
\end{remark}

A saddle point $(y,p)\in \begin{rev}W\left(0,T;H^1(\Omega)\right)\times W\left(0,T;H^1(\Omega)\right)\end{rev}$ of the Lagrangian (\ref{lagrangian}) is given by
\begin{align}
\frac{\partial \LL(\Omega,y,p)}{\partial y}=\frac{\partial \LL(\Omega,y,p)}{\partial p}=0\label{saddlepointcond}
\end{align}
which leads in strong form to the adjoint equation
\begin{align}
-\frac{\partial p}{\partial t}-\mathrm{div}(k\nabla p)\hspace{.3mm}&=-(y-\overline{y}) \quad \text{in }\Omega \times [0,T)\label{adjoint1}\\
p\hspace{.3mm}&= 0\quad\text{in }\Omega \times \{T\}\label{adjoint2}\\
\left\llbracket k\nd{p}\right\rrbracket\hspace{.3mm}&=0 \quad\text{on }\Gi \times [0,T)\label{adjoint3}\\
\left\llbracket p\right\rrbracket\hspace{.3mm}&=0 \quad\text{on }\Gi \times [0,T)\label{adjoint4}\\
\nd{p}\hspace{.3mm}&=0\quad\text{on }\left(\Gb\cup\Gl\cup\Gr\right) \times [0,T)\label{adjoint5}\\
p\hspace{.3mm}&=0\quad\text{on }\Gt \times [0,T)\label{adjoint6}\\
p_1\hspace{.3mm}&=-k_1p\quad\text{on }\left(\Gb\cup\Gl\cup\Gr\right) \times [0,T)\label{adjoint7}\\
p_2\hspace{.3mm}&=k_1\nd{p}\quad\text{on }\Gt \times [0,T)\label{adjoint8}
\end{align}
and to the state equation
\begin{equation}
\td{y}-\mathrm{div}(k\nabla y)=f \quad \text{in }\Omega \times (0,T]. \label{design}
\end{equation}

Let $\Omega$ be fixed. Then it is easy to verify that
\begin{equation}
\label{minmax_formulation}
J(\Omega)= \min_{y\in \begin{rev}
W\left(0,T;H^1(\Omega)\right)
\end{rev}} \max_{p\in \begin{rev}W\left(0,T;H^1(\Omega)\right)\end{rev}} \LL (\Omega,y,p).
\end{equation}

Now, we formulate the following theorem which provides the representation of the shape derivative expressed as a domain integral. This domain integral will later allow us to calculate the boundary expression of the shape derivative.
\begin{theorem}
\label{theorem_sg_b}
Assume that the parabolic PDE problem (\ref{oc2}-\ref{oc6}) is $H^1$-regular, so that its solution $y$ is at least in $\begin{rev}W\left(0,T;H^1(\Omega)\right)\end{rev}$. Moreover, assume that the adjoint equation (\ref{adjoint1}-\ref{adjoint6}) admits a solution $p\in \begin{rev}W\left(0,T;H^1(\Omega)\right)\end{rev}$. Then the shape derivative of the objective $J$ (without perimeter regularization) at $\Omega$ in the direction $V$ is given by
\begin{equation}
\label{boundary_expression}
\boxed{
\begin{split}
dJ(\Omega)[V]=\int_{0}^{T}\int_{\Omega}&-k\nabla y^T\left(\nabla V+\nabla V^T\right)\nabla p-p\nabla f^T V\\
&+\mathrm{div}(V)\left(\frac{1}{2}(y-\overline{y})^2+\td{y}p+k\nabla y^T\nabla p-fp\right)dx\hspace{.3mm}dt
\end{split}
}
\end{equation}
\end{theorem}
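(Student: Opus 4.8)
The plan is to exploit the min-max (saddle-point) structure recorded in (\ref{minmax_formulation}) together with the differentiability theorem of Correa and Seeger \cite[theorem 2.1]{CorreaSeger}, exactly as announced before the statement. The guiding idea is that, since $(y,p)$ is a saddle point of $\LL(\Omega,\cdot,\cdot)$, the state and adjoint equations encoded in (\ref{saddlepointcond}) hold, so that when we differentiate $\LL(\Omega_t,y_t,p_t)$ along the transported saddle-point branch the chain-rule contributions $\frac{\partial\LL}{\partial y}[D_my]$ and $\frac{\partial\LL}{\partial p}[D_mp]$ vanish. Consequently the Eulerian derivative (\ref{eulerian}) collapses to the \emph{partial} derivative of the Lagrangian with respect to the moving geometry, with $y$ and $p$ transported as material quantities. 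In other words, every term carrying a material derivative $D_my$ or $D_mp$ (or a trace, gradient, or normal derivative thereof) drops out, and only the terms generated by the explicit dependence of the integrals on $\Omega_t$ survive.

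To make this rigorous I would first transport the problem to the fixed reference configuration by the pull-back $F_t=\mathrm{id}+tV$, writing $G(t,\tilde y,\tilde p):=\LL(\Omega_t,\tilde y\circ F_t^{-1},\tilde p\circ F_t^{-1})$ for $\tilde y,\tilde p\in W(0,T;H^1(\Omega))$, so that $J(\Omega_t)=\min_{\tilde y}\max_{\tilde p}G(t,\tilde y,\tilde p)$. One then checks the hypotheses of \cite[theorem 2.1]{CorreaSeger}: existence of a saddle point of $G(t,\cdot,\cdot)$ for small $t$, convergence of these saddle points to $(y,p)$ as $t\to 0^+$, and continuity of $t\mapsto\partial_tG$ near the saddle point. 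This is precisely where the assumed $H^1$-regularity of the state and the solvability $p\in W(0,T;H^1(\Omega))$ of the adjoint system (\ref{adjoint1})--(\ref{adjoint6}) are used. The conclusion of the theorem is
\[
DJ(\Omega)[V]=\frac{\partial}{\partial t}G(t,y,p)\Big|_{t=0},
\]
which I then evaluate term by term on the Lagrangian (\ref{lagrangian}) with $a$ in the form (\ref{wfa2}) and $b$ as in (\ref{wfb})--(\ref{b2}).

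For the volume integrals the differentiation rule (\ref{der_domain_int}) produces, for each integrand $\phi$, a material-derivative part $D_m\phi$ and a volume-change part $\mathrm{div}(V)\,\phi$. The volume-change parts assemble the factor $\mathrm{div}(V)\big(\tfrac12(y-\overline y)^2+\td{y}p+k\nabla y^T\nabla p-fp\big)$ in (\ref{boundary_expression}), once the temporal boundary contributions $\intdx{y(T,x)p(T,x)}-\intdx{y_0p(0,x)}$ and the term $-\intdxdt{\td{p}y}$ are recombined, via integration by parts in time and the initial condition $y(0)=y_0$, into $\intdxdt{\td{y}p}$ (note that $\mathrm{div}(V)$ is time-independent, so this recombination is legitimate under the time integral). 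The material-derivative parts are handled by (\ref{product_rule}) and, crucially, by (\ref{material_grad_grad}) applied to $\nabla y^T\nabla p$: since $k$ is piecewise constant, $D_mk=0$, and (\ref{material_grad_grad}) splits $k\,D_m(\nabla y^T\nabla p)$ into the terms $k\,\nabla D_mp^T\nabla y$ and $k\,\nabla p^T\nabla D_my$, which carry material derivatives of $p$ and $y$ and hence drop at the saddle point, together with the purely geometric term $-k\nabla y^T(\nabla V+\nabla V^T)\nabla p$, which survives and supplies the first line of (\ref{boundary_expression}). In the source term $-\intdxdt{fp}$ the product rule gives $-(D_mf)p-fD_mp$; since $f$ is a prescribed ambient datum, $D_mf=\nabla f^TV$, and dropping $fD_mp$ leaves $-p\nabla f^TV$. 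The data term contributes only through $\mathrm{div}(V)$, because its material derivative involves $D_my$ (and the transported observation satisfies $D_m\overline y=0$), the coefficient being consistent with the adjoint normalization in (\ref{adjoint1}).

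The main obstacle is twofold, and sits exactly where the parabolic and interface structure enter. First, verifying the Correa--Seeger hypotheses in the Bochner space $W(0,T;H^1(\Omega))$ — in particular the uniform existence and the $t\to 0$ convergence of the saddle points, uniformly in the PDE time — is the genuinely technical step, and is what the regularity assumptions are designed to support. Second, one must show that none of the non-volume terms contaminates the domain integral: the integrals over the fixed outer boundary $\Go$ (including the $b_2$-contributions) have vanishing shape derivative because $V$ is supported away from $\Go$, while the interface integral $-\intdsidt{\llbracket k\,\nd{y}\,p\rrbracket}$ over the \emph{moving} $\Gi$ must be differentiated with the Hadamard boundary formula. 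Here continuity of $p$ across $\Gi$ (cf. (\ref{adjoint4})) and the flux continuity of $y$ (cf. (\ref{oc6})) make the integrand $\llbracket k\,\nd{y}\,p\rrbracket=p\,\llbracket k\,\nd{y}\rrbracket$ vanish pointwise at the saddle point, and the only remaining contribution carries a material derivative of the flux of $y$ and therefore drops as well. Collecting the three surviving contributions yields precisely (\ref{boundary_expression}).
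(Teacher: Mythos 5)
Your proposal follows essentially the same route as the paper's proof: Correa--Seeger applied to the min--max formulation (\ref{minmax_formulation}), differentiation of the Lagrangian via (\ref{der_domain_int}) together with (\ref{product_rule}) and (\ref{material_grad_grad}), elimination of all terms carrying $D_m y$ or $D_m p$ through the state and adjoint equations, identification of $\dot f p$ with $p\nabla f^T V$, and removal of the outer-boundary and interface contributions via the support of $V$ and the jump conditions (\ref{oc6}), (\ref{adjoint3}), (\ref{adjoint4}). The only cosmetic differences are that the paper assembles the $\dot y$, $\dot p$ terms explicitly into the strong-form residuals of the adjoint and state equations rather than invoking the abstract vanishing of $\partial\LL/\partial y$ and $\partial\LL/\partial p$, and that it, too, leaves the verification of the Correa--Seeger hypotheses to the reader.
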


\begin{proof}
We apply the theorem of Correa and Seger on the right hand side of (\ref{minmax_formulation}), i.e.\ we obtain formula (\ref{boundary_expression}) by evaluation of the shape derivative of the Lagrangian (\ref{lagrangian}) in its saddle point. The verification of the assumptions of this theorem can be checked in much the same way as in \cite[chapter 10, subsection 6.4]{Delfour-Zolesio-2001}. We leave it to the reader to verify them.
Applying the rule for differentiating domain integrals which is given in (\ref{der_domain_int}) yields
\begin{align}
&d\LL(\Omega,y,p)[V]\nonumber\\
&=\lim\limits_{s\to 0^+}\frac{\LL(\Omega_s,y,p)-\LL(\Omega,y,p)}{s}=\frac{d^+}{ds}\LL(\Omega_s,y,p)\,\rule[-3mm]{.1mm}{6mm}_{\hspace{.5mm}s=0}\nonumber\\
&=\int_\Omega\Bigg[ \hspace{1mm}\frac{1}{2}\intdt{D_m\left((y-\overline{y})^2\right)}+D_m\left(y(T,x)p(T,x)\right)-D_m\left(y_0\hspace{.3mm}p(0,x)\right)\nonumber\\
&\hspace*{13.8mm}-\intdt{D_m\left(\td{p}y\right)}+\intdt{D_m\left(k\nabla y^T\nabla p\right)}-\intdt{D_m\left(fp\right)}\nonumber\\
&\hspace*{13.8mm}+\mathrm{div}(V)\Bigg(\frac{1}{2}\intdt{(y-\overline{y})^2}+y(T,x)p(T,x)-y_0\hspace{.3mm}p(0,x)\nonumber\\
&\hspace*{30.7mm}-\intdt{\td{p}y}+\intdt{k\nabla y^T\nabla p}-\intdt{fp}\Bigg)\Bigg]dx\nonumber\\
&\hspace*{6.7mm}-\intdtdsi{D_m\left(\left\llbracket k \nd{y}p\right\rrbracket\right)+\mathrm{div}_\Gi(V)\hspace{.3mm}\left\llbracket k \nd{y}p\right\rrbracket}\nonumber\\
&\hspace*{6.7mm}-\intdtdso{D_m\left( k_1 \nd{y}p\right)+\mathrm{div}_\Go(V)\hspace{.3mm} k_1 \nd{y}p}\nonumber\\
&\hspace*{6.7mm}-\int_{\Gt}\intdt{ D_m\hspace{-.5mm}\left(p_1(y-1)\right)+\mathrm{div}_\Gt(V)\hspace{.3mm} p_1(y-1)}\hspace{.7mm}ds\nonumber\\
&\hspace*{6.7mm}-\int_{\Go\setminus\Gt}\intdt{ D_m\hspace{-.5mm}\left(p_2\nd{y}\right)+\mathrm{div}_{\Go\setminus\Gt}(V)\hspace{.3mm} p_2\nd{y}}\hspace{.7mm}ds\nonumber
\end{align}
Now, applying (\ref{product_rule}) and (\ref{material_grad_grad}) we obtain
\begin{align}
& d\LL(\Omega,y,p)[V]\nonumber\\
&=\int_\Omega\Bigg[ \hspace{1mm}\intdt{(y-\overline{y})\dot{y}}+\dot{y}(T,x)p(T,x)+y(T,x)\dot{p}(T,x)-y_0\hspace{.3mm}\dot{p}(0,x)\nonumber\\
&\hspace*{14.1mm}-\intdt{D_m\hspace{-.5mm}\left(\td{p}\right) y+\td{p}\dot{y}}-\intdt{\dot{f}p+f\dot{p}}\nonumber\\
&\hspace*{14.1mm}+\intdt{k\nabla \dot{y}^T\nabla p+k\nabla y^T\nabla \dot{p}-k\nabla y^T\left(\nabla V+\nabla V^T\right)\nabla p}\nonumber\\
&\hspace*{14.1mm}+\mathrm{div}(V)\Bigg(\frac{1}{2}\intdt{(y-\overline{y})^2}+y(T,x)p(T,x)-y_0\hspace{.3mm}p(0,x)\nonumber\\
&\hspace*{32mm}-\intdt{\td{p}y}+\intdt{k\nabla y^T\nabla p}-\intdt{fp}\Bigg)\Bigg]dx\nonumber\\
&\hspace*{8mm}-\intdtdsi{\left\llbracket D_m\hspace{-.5mm}\left(k\nd{y}\right)p+k\nd{y}\dot{p}\right\rrbracket+\mathrm{div}_\Gi(V)\hspace{.3mm}\left\llbracket k \nd{y}p\right\rrbracket}\nonumber
\end{align}
\begin{align}
&\hspace*{8mm}-\intdtdso{ D_m\hspace{-.5mm}\left(k_1\nd{y}\right)p+k_1\nd{y}\dot{p}+\mathrm{div}_\Go(V)\hspace{.3mm} k_1 \nd{y}p}\nonumber\\
&\hspace*{8mm}-\int_{\Gt}\intdt{ \dot{p}_1(y-1)+p_1\dot{y}+\mathrm{div}_\Gt(V)\hspace{.3mm} p_1(y-1)}\hspace{.7mm}ds\nonumber\\
&\hspace*{8mm}-\int_{\Go\setminus\Gt}\intdt{ \dot{p}_2\nd{y}+p_2D_m\hspace{-.5mm}\left(\nd{y}\right)+\mathrm{div}_{\Go\setminus\Gt}(V)\hspace{.3mm} p_2\nd{y}}\hspace{.7mm}ds\nonumber
\end{align}
From this we get
\begin{align}
& d\LL(\Omega,y,p)[V]\nonumber\\
&= \intdtdx{\left((y-\overline{y})-\td{p}-\mathrm{div}(k\nabla p)\right)\dot{y}+\left(\td{y}-\mathrm{div}(k\nabla y)-f\right)\dot{p}}\nonumber\\
&\hspace*{3.5mm}+\int_{0}^{T}\int_{\Omega}-k\nabla y^T\left(\nabla V+\nabla V^T\right)\nabla p-\dot{f}p\nonumber\\
&\hspace*{18mm}+\mathrm{div}(V)\left(\frac{1}{2}(y-\overline{y})^2+\td{y}p+k\nabla y^T\nabla p-fp\right)dx\hspace{.3mm}dt\nonumber\\
&\hspace*{3.5mm}+\intdtdsi{\left\llbracket k\nd{p}\dot{y}\right\rrbracket-\left\llbracket D_m\hspace{-.5mm}\left(k\nd{y}\right)p\right\rrbracket-\mathrm{div}_\Gi(V)\left\llbracket k\nd{y}p\right\rrbracket}\nonumber\\
&\hspace*{3.5mm}+\intdtdso{ \nd{p}\dot{y}-k_1D_m\hspace{-.5mm}\left(k_1\nd{y}\right)p-\mathrm{div}_\Go(V)k_1\nd{y}p}\nonumber\\
&\hspace*{3.5mm}-\int_{\Gt}\intdt{ \dot{p}_1(y-1)+p_1\dot{y}+\mathrm{div}_\Gt(V)\hspace{.3mm} p_1(y-1)}\hspace{.7mm}ds\nonumber\\
&\hspace*{3.5mm}-\int_{\Go\setminus\Gt}\intdt{ \dot{p}_2\nd{y}+p_2D_m\hspace{-.5mm}\left(\nd{y}\right)+\mathrm{div}_{\Go\setminus\Gt}(V)\hspace{.3mm} p_2\nd{y}}\hspace{.7mm}ds\label{derLag}
\end{align}
where the term $\dot{f}p$ is equal to $p\nabla f^TV$ due to (\ref{shape_der}). The outer boundary $\Go$ is not variable. Thus, we can choose the deformation vector field $V$ equals zero in small neighbourhoods of $\Go$. Moreover, each material derivative in small neighbourhoods of $\Go$ is equal to zero. Therefore, the three outer integrals in (\ref{derLag}) vanish. Now, let us consider the saddle point condition (\ref{saddlepointcond}) or respectively (\ref{adjoint1}-\ref{design}). Due to the continuity of the state and of the flux (\ref{oc6}) their material derivative is continuous. Thus, we get
\begin{align}
\left\llbracket k\nd{p}\dot{y}\right\rrbracket&=\dot{y}\left\llbracket k\nd{p}\right\rrbracket\stackrel{(\ref{adjoint3})}=0\quad \text{on }\Gi\\
\left\llbracket D_m\hspace{-.5mm}\left(k\nd{y}\right)p\right\rrbracket&=D_m\hspace{-.5mm}\left(k\nd{y}\right)\left\llbracket p\right\rrbracket\stackrel{(\ref{adjoint4})}=0\quad \text{on }\Gi.
\end{align}
Then
\begin{equation}
\label{pbe}
\left\llbracket k\nd{y}p\right\rrbracket=0\quad \text{on }\Gi
\end{equation}
follows from (\ref{oc6}), (\ref{adjoint4}) and the identity
\begin{equation}
\left\llbracket ab\right\rrbracket=\left\llbracket a\right\rrbracket b_1 +a_2 \left\llbracket b\right\rrbracket= a_1 \left\llbracket b\right\rrbracket+\left\llbracket a\right\rrbracket b_2
\end{equation}
which implies
\begin{equation}
\left\llbracket ab\right\rrbracket=0 \text{ if } \left\llbracket a\right\rrbracket=0\wedge \left\llbracket b\right\rrbracket=0.
\end{equation}
By combining (\ref{derLag}-\ref{pbe}), we obtain (\ref{boundary_expression}).
\end{proof}

Now, we want to convert the domain integral (\ref{boundary_expression}) into a boundary integral which is better suited for a finite element implementation as already mentioned for example in \cite[remark 2.3, p. 531]{Delfour-Zolesio-2001}. The following theorem is a generalization of lemma 1 in \cite{Paganini} for parabolic problems and provides two representations of the shape derivative expressed as a boundary integral.


\begin{theorem}
\label{theorem_sg_bb}
Under the assumptions of theorem \ref{theorem_sg_b} the shape derivative of the objective $J$ (without perimeter regularization) at $\Omega$ in the direction $V$ is given by
\begin{equation}
\boxed{
dJ(\Omega)[V]= \intdsidt{\left<V,n\right>\left\llbracket -2k\nd{y}\nd{p}+k\nabla y^T\nabla p\right\rrbracket}\label{shape_der1}
}
\end{equation}
Let $y_1:=y\,\rule[-2mm]{.1mm}{4mm}_{\hspace{.5mm}\Omega_1}$ and $p_2:=p\,\rule[-2mm]{.1mm}{4mm}_{\hspace{.5mm}\Omega_2}$. Then the shape derivative of the objective $J$ at $\Omega$ in the direction $V$ can be expressed as
\begin{equation}
\boxed{
dJ(\Omega)[V]= \intdsidt{\left\llbracket k\right\rrbracket \nabla y_1^T\nabla p_2\left<V,n\right>}\label{shape_der2}
}
\end{equation}
\end{theorem}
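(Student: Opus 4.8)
The plan is to take the domain representation (\ref{boundary_expression}) from Theorem \ref{theorem_sg_b} as the starting point and to turn it into an integral over $\Gi$ by integration by parts, exploiting that on each subdomain $\Omega_1,\Omega_2$ the coefficient $k$ is constant and that $y,p$ are classical solutions of the strong forms (\ref{design}) and (\ref{adjoint1}). Since the outer boundary $\Go$ is fixed, I would choose $V\equiv 0$ in a neighbourhood of $\Go$, so that every contribution on $\Go$ drops out; by the Hadamard structure theorem \cite[theorem 2.27]{SokoZol} the result should then depend only on the normal component $\langle V,n\rangle$ on the smooth interface $\Gi$.

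First I would split $\int_\Omega=\int_{\Omega_1}+\int_{\Omega_2}$ and abbreviate by $G:=\tfrac{1}{2}(y-\overline{y})^2+\td{y}p+k\nabla y^T\nabla p-fp$ the scalar multiplying $\mathrm{div}(V)$. On each $\Omega_\ell$ the divergence theorem yields
\begin{equation*}
\int_{\Omega_\ell}\mathrm{div}(V)\,G\,dx=\int_{\partial\Omega_\ell}G\,\langle V,n\rangle\,ds-\int_{\Omega_\ell}\nabla G^T V\,dx,
\end{equation*}
while a tensorial integration by parts applied to $-k\nabla y^T(\nabla V+\nabla V^T)\nabla p$ produces the boundary term $-k\left[(\nabla y^T V)\nd{p}+\nd{y}\,(\nabla p^T V)\right]$ together with volume terms involving $k\Delta y$, $k\Delta p$ and $\nabla(\nabla y^T\nabla p)^T V$. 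Summing the two subdomains, and using that the two interface normals are $\pm n$, converts every boundary term into a jump $\llbracket\cdot\rrbracket$ across $\Gi$, the $\Go$ parts vanishing by the choice of $V$.

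The heart of the argument is the cancellation of the surviving volume terms. Substituting the strong forms $k\Delta y=\td{y}-f$ and $k\Delta p=(y-\overline{y})-\td{p}$ into the remaining $-\nabla G^T V$ and tensor volume terms, the bulk integrand collapses up to the total time derivative $-\frac{d}{dt}\bigl(p\,\nabla y^T V\bigr)$, whose endpoint contributions are removed by the terminal condition $p(T)=0$ in (\ref{adjoint2}). I expect this step to be the main obstacle: it requires careful sign bookkeeping under the two opposite interface normals and, more subtly, a consistent treatment of the material derivatives of the fixed data $\overline{y}$ and $y_0$ — it is precisely this consistency that guarantees that the residual non-interface terms (the endpoint term at $t=0$ and the term carrying $\nabla\overline{y}$) cancel rather than polluting the interface integral.

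It then remains to simplify the interface jumps. Continuity of the state and adjoint across $\Gi$, from (\ref{oc6}) and (\ref{adjoint4}), makes the tangential gradients $\nabla_{\Gi}y,\nabla_{\Gi}p$ continuous, so that $\llbracket G\rrbracket=\llbracket k\nabla y^T\nabla p\rrbracket$ because the summands $\tfrac{1}{2}(y-\overline{y})^2,\ \td{y}p,\ fp$ are continuous; and the flux continuities $\llbracket k\nd{y}\rrbracket=\llbracket k\nd{p}\rrbracket=0$ from (\ref{oc6}) and (\ref{adjoint3}) cause the tangential parts of the tensor boundary term to drop, leaving $\llbracket 2k\nd{y}\nd{p}\rrbracket\langle V,n\rangle$. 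Collecting the two jumps gives (\ref{shape_der1}). Finally, for (\ref{shape_der2}) I would split $\nabla y=\nd{y}\,n+\nabla_{\Gi}y$ and $\nabla p=\nd{p}\,n+\nabla_{\Gi}p$, so that $-2k\nd{y}\nd{p}+k\nabla y^T\nabla p=-k\nd{y}\nd{p}+k\,\nabla_{\Gi}y^T\nabla_{\Gi}p$, and rewrite the jump using the continuous fluxes $k\nd{y},k\nd{p}$ and continuous tangential gradients; expressing $\nd{y_1}=\tfrac{1}{k_1}\,k\nd{y}$ and $\nd{p_2}=\tfrac{1}{k_2}\,k\nd{p}$ one checks directly that the integrand equals $\llbracket k\rrbracket\,\nabla y_1^T\nabla p_2$, which is (\ref{shape_der2}).
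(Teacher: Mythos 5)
Your proposal is correct and follows essentially the same route as the paper's proof: integrating the domain expression \eqref{boundary_expression} by parts over $\Omega_1$ and $\Omega_2$ separately (your ``tensorial integration by parts'' is exactly the Berggren identity plus Green's formula used in the paper), killing the volume terms with the strong state and adjoint equations and the terminal condition $p(T)=0$, simplifying the interface jumps via the continuity of $y$, $p$ and of the fluxes to reach \eqref{shape_der1}, and then using the normal/tangential splitting of the gradients to obtain \eqref{shape_der2} (the identity the paper delegates to Ito--Kunisch, which you verify directly). No substantive difference in method.
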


\begin{proof}
Integration by parts on the integral $$\int_\Omega \mathrm{div}(V)\left(\frac{1}{2}(y-\overline{y})^2+\td{y}p+k\nabla y^T\nabla p-fp\right)dx$$ in (\ref{boundary_expression}) yields
\begin{align}
&\int_\Omega \mathrm{div}(V)\left(\frac{1}{2}(y-\overline{y})^2+\td{y}p+k\nabla y^T\nabla p-fp\right)dx\nonumber\\
&=\int_{\Omega_1} \mathrm{div}(V)\left(\frac{1}{2}(y-\overline{y})^2+\td{y}p+k_1\nabla y^T\nabla p-fp\right)dx\nonumber\\
&\hspace*{3.5mm}+\int_{\Omega_2} \mathrm{div}(V)\left(\frac{1}{2}(y-\overline{y})^2+\td{y}p+k_2\nabla y^T\nabla p-fp\right)dx\nonumber\\
&=\int_{\Gi\cup\Go}\left(\frac{1}{2}(y-\overline{y})^2+\td{y}p+k_1\nabla y^T\nabla p-fp\right)\left<V,n\right>ds\nonumber\\
&\hspace*{3.5mm}-\int_{\Omega_1}V^T\left((y-\overline{y})\nabla y+\nabla\left(\td{y}p\right)+k_1\nabla\left(\nabla y^T\nabla p\right)-\nabla\left(fp\right)\right)\hspace{.3mm}dx\nonumber\\
&\hspace*{3.5mm}+\int_{\Gi} \left(\frac{1}{2}(y-\overline{y})^2+\td{y}p+k_2\nabla y^T\nabla p-fp\right)\left<V,-n\right>ds\nonumber\\
&\hspace*{3.5mm}-\int_{\Omega_2} V^T\left((y-\overline{y})\nabla y+\nabla\left(\td{y}p\right)+k_2\nabla\left(\nabla y^T\nabla p\right)-\nabla\left(fp\right)\right)\hspace{.3mm}dx\nonumber\\
&=-\int_{\Omega}V^T\left((y-\overline{y})\nabla y+\nabla\left(\td{y}p\right)+k\nabla\left(\nabla y^T\nabla p\right)-\nabla f p-f\nabla p\right)\hspace{.3mm}dx\nonumber\\
&\hspace*{3.5mm}+\int_{\Gi} \left\llbracket\left(\frac{1}{2}(y-\overline{y})^2+\td{y}p+k\nabla y^T\nabla p-fp\right)\left<V,n\right>\right\rrbracket ds\nonumber\\
&\hspace*{3.5mm}+\int_{\Go}\left(\frac{1}{2}(y-\overline{y})^2+\td{y}p+k_1\nabla y^T\nabla p-fp\right)\left<V,n\right>ds\label{int_by_parts1}
\end{align}
Combining (\ref{boundary_expression}), (\ref{int_by_parts1}) and the vector calculus identity 
\begin{equation*}
\nabla y^T\left(\nabla V+\nabla V^T\right)\nabla p+V^T\nabla\left(\nabla y^T\nabla p\right)=\nabla p^T\nabla\left(V^T\nabla y\right)+\nabla y^T \nabla\left(V^T\nabla p\right)
\end{equation*}
which was proved in \cite{Berggren} gives
\begin{align}
& dJ(\Omega,y,p)\nonumber\\
&=\int\limits_{0}^{T} \Bigg[\int_{\Omega}-k\nabla p^T\nabla\left(V^T\nabla y\right)-k\nabla y^T \nabla\left(V^T\nabla p\right)-(y-\overline{y})V^T\nabla y\nonumber\\
&\hspace*{1.6cm}-V^T\nabla \left(\td{y}p\right)+fV^T\nabla p \hspace{.3mm}dx\nonumber\\
&\hspace*{1cm}+\int_{\Gi} \left\llbracket\left(\frac{1}{2}(y-\overline{y})^2+\td{y}p+k\nabla y^T\nabla p-fp\right)\left<V,n\right>\right\rrbracket ds\nonumber\\
&\hspace*{1cm}+\int_{\Go}\left(\frac{1}{2}(y-\overline{y})^2+\td{y}p+k_1\nabla y^T\nabla p-fp\right)\left<V,n\right>ds\Bigg]dt. \label{sg1}
\end{align}
Then, applying integration by parts on the integral $\int_{\Omega}k\nabla y^T\nabla\left(V^T\nabla p\right)dx$ in (\ref{sg1}) we get
\begin{align}
&\int_{\Omega}k\nabla y^T\nabla\left(V^T\nabla p\right)dx\nonumber\\
&=\int_{\Omega_1}k_1\nabla y^T\nabla\left(V^T\nabla p\right)dx+\int_{\Omega_2}k_2\nabla y^T\nabla\left(V^T\nabla p\right)dx\nonumber\\
&=\int_{\Gi\cup\Go}k_1\nd{y}V^T\nabla p \hspace{.3mm}ds-\int_{\Omega_1}\mathrm{div}(k_1\nabla y)\nabla p^TVdx+\int_{\Gi}k_2\nd{y}V^T\nabla p \hspace{.3mm}ds\nonumber\\
&\hspace{3.7mm}-\int_{\Omega_2}\mathrm{div}(k_2\nabla y)\nabla p^TVdx\nonumber\\
&=-\intdx{\mathrm{div}(k\nabla y)\nabla p^TV}+\int_{\Gi}\left\llbracket k\nd{y}V^T\nabla p \hspace{.3mm}\right\rrbracket ds+\int_{\Go}k_1\nd{y}V^T\nabla p \hspace{.3mm}ds\label{int_by_parts2}
\end{align}
and analogously
\begin{align}
& \int_{\Omega}k\nabla p^T\nabla\left(V^T\nabla y\right)dx\nonumber\\
&=-\intdx{\mathrm{div}(k\nabla p)\nabla y^TV}+\int_{\Gi}\left\llbracket k\nd{p}V^T\nabla y \hspace{.3mm}\right\rrbracket ds+\int_{\Go}k_1\nd{p}V^T\nabla y \hspace{.3mm}ds. \label{int_by_parts3}
\end{align}
Integration by parts on the integral $\intdt{\nabla \td{y}p}$ in (\ref{sg1}) yields
$$\intdt{\nabla \td{y}p}=\nabla y(T,x)\hspace{.3mm}p(T,x)-\intdt{\nabla y \td{p}}$$
Thus, it follows that
\begin{align}
& dJ(\Omega,y,p)\nonumber\\
&= \int\limits_{0}^{T} \Bigg[\int_\Omega \nabla p^T V \left(-\td{y}+\mathrm{div}(k\nabla y)+f\right)\nonumber\\
&\hspace*{1.6cm}+\nabla y^TV\left(\td{p}+\mathrm{div}(k\nabla p)-(y-\overline{y})\right)dx\nonumber\\
&\hspace*{1cm}+\int_{\Gi} \left\llbracket\left(\frac{1}{2}(y-\overline{y})^2+\td{y}p-k\nabla y^T\nabla p-fp\right)\left<V,n\right>\right\rrbracket\nonumber\\
&\hspace*{2.2cm}-\left\llbracket k\nd{y}V^T\nabla p\hspace{.3mm}\right\rrbracket -\left\llbracket k\nd{p}V^T\nabla y\hspace{.3mm}\right\rrbracket ds\Bigg]dt\nonumber\\
&\hspace*{1cm}+\int_{\Go}\left(\frac{1}{2}(y-\overline{y})^2+\td{y}p-k_1\nabla y^T\nabla p-fp\right)\left<V,n\right>\nonumber\\
&\hspace*{2.2cm}-k_1\nd{y}V^T\nabla p-k_1\nd{p}V^T\nabla y\hspace{.5mm}ds\Bigg]dt\nonumber\\
&\hspace*{3.5mm}+\intdx{V^T\nabla y(T,x)\hspace{.3mm}p(T,x)}\label{sg2}
\end{align}
The domain integrals in (\ref{sg2}) vanish due to (\ref{adjoint1}), (\ref{adjoint2}) and (\ref{design}). Moreover, the term $\left\llbracket \left(\td{y}-f\right)p\right\rrbracket$ vanishes because of (\ref{adjoint4}) and the term $\left\llbracket \frac{1}{2}(y-\overline{y})^2\right\rrbracket$ because of (\ref{oc6}). Then
\begin{equation}
\label{pb}
\left\llbracket k\nd{y}V^T\nabla p\right\rrbracket=\left\llbracket k\nd{p}V^T\nabla y\right\rrbracket =\left<V,n\right>\left\llbracket k\nd{y}\nd{p}\right\rrbracket
\end{equation}
follows from (\ref{oc6}) and (\ref{adjoint3}). Since the outer boundary $\Go$ is not variable, we can choose the deformation vector field $V$ equals zero in small neighbourhoods of $\Go$. Therefore, the outer integral in (\ref{sg2}) disappears and we obtain the interface integral (\ref{shape_der1}). It is easy to verify that
\begin{equation}
\label{id_sg}
\int_\Gi\left<V,n\right>\left\llbracket -2k\nd{y}\nd{p}+k\nabla y^T\nabla p\right\rrbracket ds=\int_\Gi\left\llbracket k\right\rrbracket \nabla y_1^T\nabla p_2\left<V,n\right>ds
\end{equation}
which completes the proof. For a detailed computation of (\ref{pb}) and (\ref{id_sg}) we refer the reader to \cite[p. 320]{ItoKunisch}.
\end{proof}

Now, we consider the objective (\ref{oc1}) with perimeter regularization. For the finite element implementation of (\ref{oc1}--\ref{oc6}) in section \ref{numex} we need a representation of its shape derivative expressed as boundary integral. Two such representations are given by the following theorem. 
\begin{theorem}\label{bc}
Under the assumptions of theorem \ref{theorem_sg_b} the shape derivative of the objective $J$ (with perimeter regularization) at $\Omega$ in the direction $V$ is given by
\begin{equation}
\label{sd1}
\boxed{
dJ(\Omega)[V]=\int_{\Gi}\left[\intdt{\left<V,n\right>\left\llbracket -2k\nd{y}\nd{p}+k\nabla y^T\nabla p\right\rrbracket}+\left<V,n\right>\mu\kappa\right]ds
}
\end{equation}
where $\kappa$ denotes the curvature corresponding to the normal $n$. Let $y_1:=y\,\rule[-2mm]{.1mm}{4mm}_{\hspace{.5mm}\Omega_1}$ and $p_2:=p\,\rule[-2mm]{.1mm}{4mm}_{\hspace{.5mm}\Omega_2}$. Then, the shape derivative of the objective $J$ (with perimeter regularization) at $\Omega$ in the direction $V$ can be expressed as
\begin{equation}
\label{sd2}
\boxed{
dJ(\Omega)[V]=\int_{\Gi}\left[\intdt{\left<V,n\right>\left\llbracket k\right\rrbracket \nabla y_1^T\nabla p_2}+\left<V,n\right>\mu\kappa\right]ds
}
\end{equation}
\end{theorem}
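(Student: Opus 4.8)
The plan is to use the linearity of the Eulerian derivative together with the results already established, so that the only new work is the differentiation of the perimeter term. Split the objective as $J(\Omega)=J_0(\Omega)+\mu\,P(\Omega)$, where $J_0$ is the objective without regularization handled in Theorems \ref{theorem_sg_b} and \ref{theorem_sg_bb}, and $P(\Omega):=\int_{\Gi}1\hspace{.5mm}ds$ is the perimeter of the interface. Since the shape derivative is linear in the sense of definition (\ref{eulerian}), we get $dJ(\Omega)[V]=dJ_0(\Omega)[V]+\mu\,dP(\Omega)[V]$. The first summand is precisely (\ref{shape_der1}), respectively (\ref{shape_der2}), so it remains only to show that $dP(\Omega)[V]=\int_{\Gi}\kappa\left<V,n\right>ds$, after which both boxed formulas follow by adding $\mu$ times this term.

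First I would transport the surface integral defining $P$. Using the boundary-integral analogue of the domain rule (\ref{der_domain_int}), namely $\frac{d^+}{ds}\!\left(\int_{\Gi_s}f\right)_{s=0}=\int_{\Gi}\left(D_mf+\mathrm{div}_{\Gi}(V)f\right)ds$, applied to the constant integrand $f\equiv1$, the material derivative vanishes and hence $dP(\Omega)[V]=\int_{\Gi}\mathrm{div}_{\Gi}(V)\,ds$, the integral of the tangential divergence of $V$. Second I would invoke the tangential divergence (Gauss) theorem on $\Gi$: writing $V$ in terms of its tangential and normal parts and using $\mathrm{div}_{\Gi}n=\kappa$, one obtains $\int_{\Gi}\mathrm{div}_{\Gi}(V)\,ds=\int_{\Gi}\kappa\left<V,n\right>ds$. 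The conormal boundary contributions that the tangential divergence theorem would generate at the endpoints of $\Gi$ on the fixed outer boundary $\Go$ drop out, because $\Go$ is not variable and $V$ may be taken to vanish in a neighbourhood of $\Go$, exactly as argued in the proofs of the two preceding theorems.

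Combining these steps yields $dP(\Omega)[V]=\int_{\Gi}\kappa\left<V,n\right>ds$, and adding $\mu$ times this to (\ref{shape_der1}) produces (\ref{sd1}), while adding it to (\ref{shape_der2}) produces (\ref{sd2}); the equivalence of the two final representations is then immediate, since the common regularization term $\int_{\Gi}\left<V,n\right>\mu\kappa\,ds$ factors out and identity (\ref{id_sg}) already equates the two misfit integrands over $\Gi$. I expect the only genuinely delicate point to be the bookkeeping in the tangential divergence theorem, and in particular the sign convention: one must verify that $\kappa=\mathrm{div}_{\Gi}n$ is taken with respect to the same normal $n$ (the unit outer normal to $\Omega_2$) that is used throughout, so that the orientation of the regularization contribution is consistent with the misfit term. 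This orientation check, rather than any hard estimate, is the step I would be most careful about.
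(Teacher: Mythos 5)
Your proposal is correct and follows essentially the same route as the paper: decompose $J$ into the misfit part plus $\mu$ times the perimeter of $\Gi$, apply theorem \ref{theorem_sg_bb} to the first summand, and add $\mu\int_{\Gi}\kappa\left<V,n\right>ds$ for the regularization. The only difference is that the paper obtains this last identity by citing proposition 5.1 of \cite{Novruzi-2002}, whereas you derive it directly from the surface transport formula and the tangential divergence theorem --- a standard argument whose orientation convention for $\kappa$ matches the paper's.
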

\begin{proof}
Combining theorem \ref{theorem_sg_bb} with proposition 5.1 in \cite{Novruzi-2002} we get (\ref{sd1}) and (\ref{sd2}).
\end{proof}
\begin{rev}
\begin{remark}
Throughout the derivation of theorems
\ref{theorem_sg_b}, \ref{theorem_sg_bb} and \ref{bc} above, we have tacitly assumed shape differentiability. Without this property, the formula manipulations can only be understood formally. The key issue  is the continuity of trace mappings of the state $y$, the adjoint $p$ and their first derivatives in theorem \ref{theorem_sg_bb} as mappings to integrable functions on the interface $\Gi$. Because of the jump in the diffusion coefficient and since we assume for the observation $\bar{y}\in W\left(0,T;H^1(\Omega)\right)$, as mentioned above, we can only assume $y,p\in W\left(0,T;H^1(\Omega)\right)$, which seems to be problematic in relation to integrable traces of derivatives. However, we can generalize the discussion in \cite{ItoKunisch} for the elliptic version of our parabolic shape optimization problem in a straight forward manner. This shows that indeed $y|_{\Omega_i},p|_{\Omega_i}\in W\left(0,T;H^2(\Omega_{i})\right)$, for $i=1,2$, which means that the trace mapping is also continuous for the first derivatives and thus yields shape differentiability
\end{remark}
\end{rev}

\section{A quasi-Newton approach on shape manifolds}\label{quasi}
As pointed out in \cite{VHS-shape-Riemann}, shape optimization can be viewed as optimization on Riemannian shape manifolds and resulting optimization methods can be constructed and analyzed within this framework, which combines algorithmic ideas from \cite{Absil-book-2008} with the differential geometric point of view established in \cite{MM-2006}.
\begin{rev}
As in \cite{VHS-shape-Riemann}, we study connected and compact subsets $\Omega_2$ of $\R^2$ with $\Omega_2\neq\emptyset$ and $C^\infty$ boundary $\partial\Omega_2$ (cf. figure \ref{fig_Omega}).\end{rev}
We now identify the variable boundary $\partial \Omega_2 = \Gi$ with a simple closed curve $c\colon S^1 \to \mathbbm{R}^2$.
Additionally, we need to describe a space including all feasible shapes $\Gi$ and the corresponding tangent spaces.
In \cite{MM-2006}, this set of smooth boundary curves $c$ is characterized by  
\[
B_e(S^1,\R^2):=\mbox{Emb}(S^1,\R^2)/\mbox{Diff}(S^1)
\] 
i.e., as the set of all equivalence classes of $C^\infty$ embeddings of $S^1$ into the plane ($\mbox{Emb}(S^1,\R^2)$), where the equivalence relation is defined by the set of all $C^\infty$ re-parameterizations, i.e., diffeomorphisms of $S^1$ into itself ($\mbox{Diff}(S^1)$). A particular point on the manifold $B_e(S^1,\R^2)$ is represented by a curve $c:S^1\ni\theta\mapsto c(\theta)\in\R^2$. Because of the equivalence relation ($\mbox{Diff}(S^1)$), the tangent space is isomorphic to the set of all normal $C^\infty$ vector fields along $c$, i.e.
\[
T_cB_e\cong\{h\ |\ h=\alpha n,\, \alpha\in C^\infty(S^1,\R)\}
\]
where $n$ is the unit exterior normal field of the shape $\Omega_2$ defined by the boundary $\partial\Omega_2=c$ such that $n (\theta)\perp c^\prime$ for all $\theta\in S^1$ and $c^\prime$ denotes the circumferential derivative as in \cite{MM-2006}. For our discussion, we pick among the other metrics discussed in \cite{MM-2006} the Sobolev metric family for
$A\ge 0$
\begin{align*}
g^1:\ &T_cB_e\times T_cB_e\to \R\\
    & (h,k)\mapsto
    \int\limits_{c=\partial\Omega_2}\alpha\beta+A\alpha^\prime\beta^\prime
    ds =((id-A\triangle_c)\alpha,\beta)_{L^2(c)}
\end{align*}
where $h=\alpha n$ and $k=\beta n$ denote two elements from
the tangent space at $c$ and $\triangle_c$ denotes the Laplace-Beltrami operator on the surface $c$.  In \cite{MM-2006} it is shown that for $A>0$ the scalar product $g^1$ defines a Riemannian metric on $B_e$ and 
thus, geodesics can be used to measure distances.
Unfortunately, this is not the case for the most simple member $g^0$ of the metric family $g^1$, where $A=0$.

\begin{rev}
With the shape space $B_e$ and its tangent space in hand we can now form the Riemannian shape gradient corresponding to a shape derivative given in the form
\[
dJ[V]=\int_c \gamma\scp{V,n}ds.
\]
In our setting the shape derivative is given in theorem \ref{theorem_sg_bb} or \ref{bc} and the Riemannian metric by $g^1$.
Finally, the Riemannian shape gradient $\text{grad}J$ is obtained by
\[
\grad J=gn\, \quad \mbox{with } (id -A\triangle_c)\tilde{\gamma}=\gamma\, .
\]
\end{rev}

In the sequel, we will also need the concept of the covariant derivative $\nabla$ and of the exponential map
\begin{align*}
\exp_c:\ &T_cB_e\to B_e\\
    & h\mapsto \exp_c(h)
\end{align*}
defining a local diffeomorphism between the tangent space and the manifold by following the locally uniquely defined geodesic starting in $c\in B_e$ with velocity $h\in T_cB_e$. The exponential map depends on the Riemannian metric $g^1$ in the usual way. 

The application of quasi-Newton methods is based on the secant condition, which is formulated on the Riemannian manifold $B_e$ analogously to  \cite{Absil-book-2008} for a step $c_{j+1}:=R_{c_j}(\eta)$ resulting from an increment $\eta_j\in T_{c_j}B_e$ in iteration $j$ via a retraction $R$ as
\[
\grad J (c_{j+1})-{\cal T}_{\eta_{j}}\grad J (c_{j})=G_{j+1}[{\cal T}_{\eta_{j}}\eta_{j}]
\]
where ${\cal T}:TB_e\oplus TB_e\to TB_e: (h_c,k_c)\mapsto {\cal T}_{h_c}k_c$ is a vector transport associated to the retraction $R$ and $G_{j+1}$ is intended to approximate the
Riemannian Hessian $\nabla\grad J(c_{j+1})$. In order to formulate the BFGS-update in a concise way, we need to introduce the following notation for a typical linear operator associated with the Riemannian metric
\begin{align*}
h\otimes k:\ &T_cB_e\to T_cB_e\\
    & v\mapsto g^1(k,v)h
\end{align*}
with this notation and together with the following abbreviations
\begin{align*}
s_j:=&{\cal T}_{\eta_{j}}\eta_{j}\in T_{c_{j+1}}B_e\\
y_j:=& \grad J (c_{j+1})-{\cal T}_{\eta_{j}}\grad J (c_{j})\in T_{c_{j+1}}B_e
\end{align*}
we can rephrase the BFGS-update on Riemannian shape space endowed with the metric $g^1$ as
\[
G_{j+1}=\tilde{G}_j-\frac{(\tilde{G}_js_j)\otimes(\tilde{G}_js_j)}{g^1(s_j,\tilde{G}_js_j)}+\frac{y_j\otimes y_j}{g^1(s_j,y_j)}
\]
where $\tilde{G}_j:={\cal T}_{\eta_j}\circ G_j\circ {\cal
  T}_{\eta_j}^{-1}$. 
In \cite{Ring-Wirth-2012}, superlinear convergence properties for BFGS-quasi-Newton-methods on manifolds are analysed for the case that ${\cal T}_{\eta_j}$ is
an isometry. This requirement is satisfied, e.g., if $\cal T$ and $R$
are the parallel transport and the exponential map. It is well-known (e.g.~\cite{Nocedal-Wright}) that the corresponding update of the inverse operator can be written in the form
\begin{align*}
G_{j+1}^{-1} = \left( id - \frac{s_j\otimes y_j}{g^1(y_j,s_j)} \right) \tilde{G}_j^{-1} \left(  id - \frac{y_j \otimes s_j}{g^1(y_j,s_j)} \right) + \frac{s_j\otimes s_j}{g^1(y_j,s_j)}
\end{align*}
This is the most convenient update formulation in an infinite dimensional setting.
In standard formulation, update formulas require the storage of the whole convergence history up to the current iteration. Limited memory update techniques (e.g.~\cite{Nocedal-Wright}) have been developed, in order to reduce the amount of storage. In the current situation, this can be analogously formulated in the following algorithmic way:

\begin{algorithmic}
\State $\rho_j \gets g^1(y_j,s_j)^{-1}$
\State $q \gets \grad J(c_j)$
\For{$i = j-1, \dots , j-m$}
	\State $s_i \gets {\cal T}_{q}s_i$
	\State $y_i \gets {\cal T}_{q}y_i$
	\State $\alpha_i \gets \rho_i g^1(s_i,q)$
	\State $q \gets q - \alpha_i d_i$
\EndFor
\State $z \gets \grad J(c_j)$
\State $q \gets \frac{g^1(y_{j-1},s_{j-1})}{g^1(y_{j-1},y_{j-1})} \grad J(c_j)$
\For{$i = j-m, \dots , j-1$}
	\State $\beta_i \gets \rho_i g^1(y_i,z)$
	\State $q \gets q + (\alpha_i - \beta_i) s_i$
\EndFor \\
\Return $q = G_j^{-1} \grad J(c_j)$
\end{algorithmic}
\begin{rev}
This is conceptually similar to the double loop algorithm in finite dimensional Euclidean spaces.
Yet the inner products are now given by the Sobolev metric and vector transports have to be considered.
\end{rev}

\section{Numerical Results and implementation details}\label{numex}
We test the algorithms developed in the previous section with the problem (\ref{oc1}-\ref{oc3}) in the domain
$\Omega=[-1,1]^2$, which contains a compact and closed subset $\Omega_2$ with smooth boundary. The parameter $k_1$ is valid in the exterior $\Omega_1 = \Omega \setminus \Omega_2$ and the parameter $k_2$ is valid in the interior $\Omega_2$. First, we build artificial data $\bar{y}$, by solving the state equation for the setting
$\bar{\Omega}_2:=\{x:\, \norm{x}_2 \le r\}$ with $r=0.5$. Afterwards, we choose another initial domain $\Omega_1$ and $\Omega_2$. Figure \ref{fig-shape} illustrates the interior boundary $\Gamma_\text{int}$ around the initial domain $\Omega_2$ and the target domain $\bar{\Omega}_2$.

For this particular test case we choose the parameter to be $k_1 = 1$ and $k_1 = 0.001$ and a regularization parameter of $\mu = 0.0001$. The final time of the simulation is $T=20$.
In order to solve the boundary value problem (\ref{oc2}-\ref{oc6}), its weak form (\ref{wf}) is discretized in space using standard linear finite elements.
The parameter $k$ is approximated in a element-wise constant space.
Due to the choice of a continuous space for $y$ and a discontinuous space for $k$, conditions \eqref{oc6} are automatically fulfilled.
Furthermore, we choose the implicit Euler method for the temporal discretization.
The interval $[0, T]$ is therefore divided by $30$ equidistantly distributed time steps.
Due to the self adjoint nature of the problem we can solve the adjoint equation (\ref{adjoint1}-\ref{adjoint6}) applying the same spatial and temporal discretization as for the primal one.
Finally, the resulting linear systems are solved using the conjugate gradient method.

An essential part of this algorithm is a discrete version of the Laplace-Beltrami operator, which is on the one hand used to get a feasible representation of the shape gradient and on the other hand is needed for the scalar products in the BFGS method.
We therefore implement the formulas given in \cite{meyer2003discrete} which describe an operator that can be used both as the Laplace-Beltrami and to compute the discrete mean curvature.
However, this approach is tailored for two dimensional, triangulated surfaces.
We thus have to extend the polygonal line in our test case in the third coordinate direction such that a surface is spanned which is then triangulated.

\begin{figure}
\begin{center}
\includegraphics[width=0.5\textwidth]{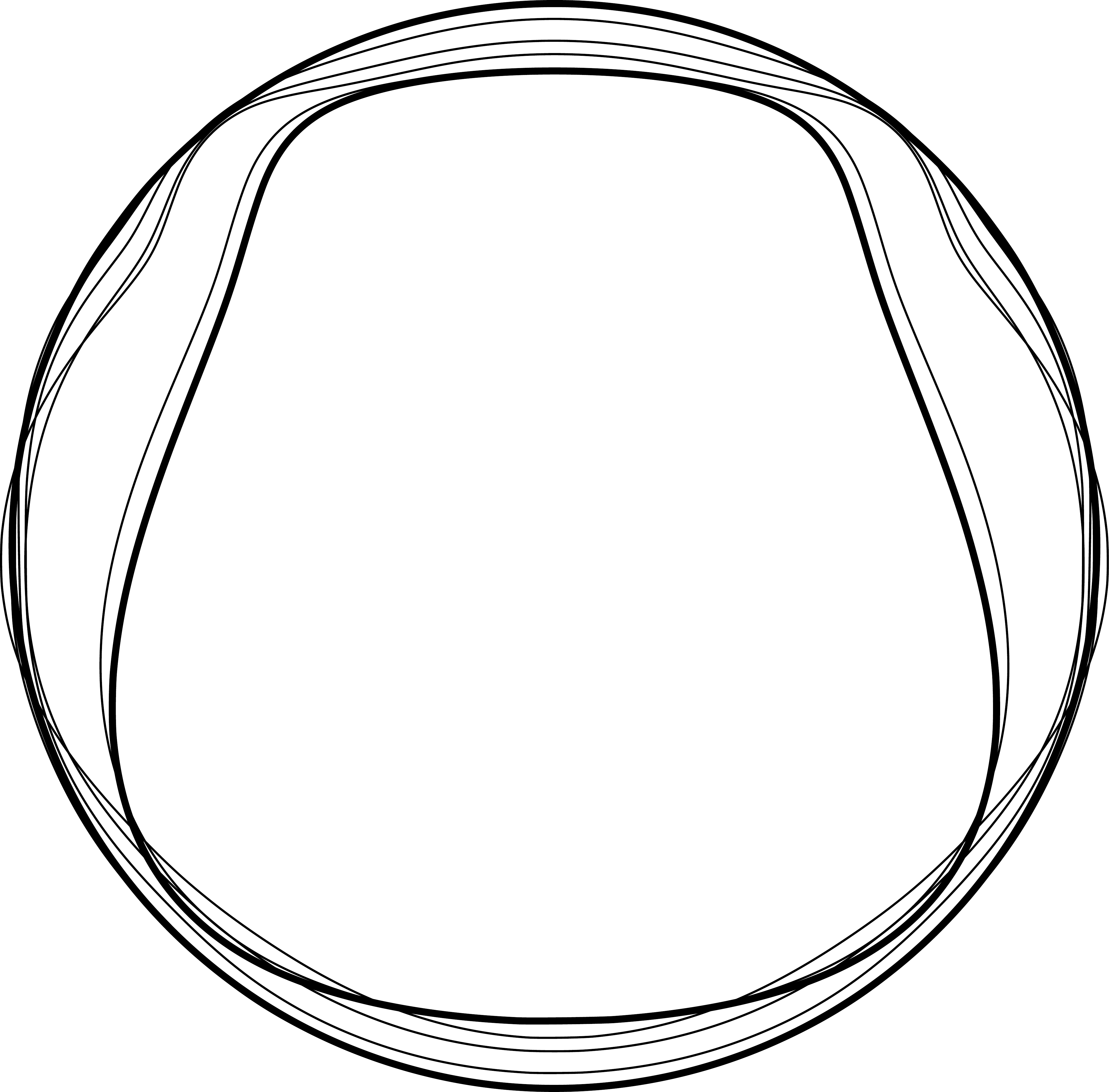}
\end{center}
\caption{\label{fig-shape}Initial and final shape geometry}
\end{figure}

\begin{figure}
\begin{center}
\includegraphics[width=1\textwidth]{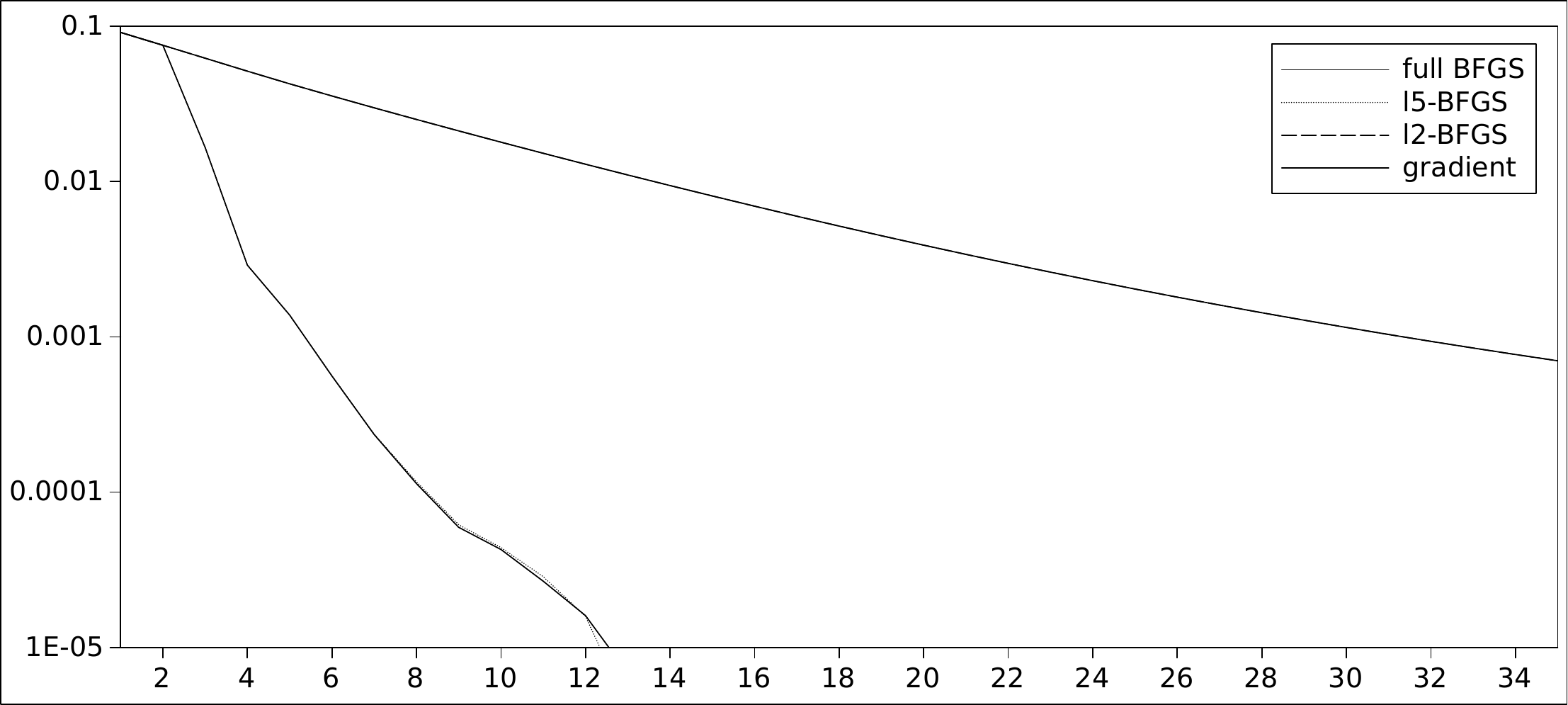}
\end{center}
\caption{\label{fig-convergence-parabolic}Different l-BFGS methods for the parabolic problem}
\end{figure}

We investigate the convergence behaviour of the following optimization
strategies
\begin{enumerate}
\item steepest descent method with fixed step-size 1.
\item limited memory BFGS quasi-Newton with constant metric parameter
  $A=0.001$ and also step-size 1.
\end{enumerate}
As observed below, the exact choice of $A$ has only a mild influence
on the overall convergence properties.

The necessary operations between the tangent spaces and the manifold
are chosen essentially as the identity operator, i.e., for $\eta\in
T_{c}B_e$, we define\
\[
R_{c}(\eta)(s):=s+\eta(s)\, ,\ \forall s\in c
\]
and
\[
{\cal T}_{\eta}v(s):=v(s-\eta(s)) \, ,\ \forall s\in R_{c}(\eta)
\]
This setting corresponds to one explicit Euler step for the exponential map and the parallel
transport in the case of the choice $A=0$ in the metric $g^1$. From an
implementation point of view this is most convenient. Computing an
explicit Euler step for the exponential map and parallel transport for
$A>0$ would require
the solution of yet another solution of an elliptic equation on the
surface to be optimized. However, numerical experiments have shown that
the convergence properties of the resulting iterations are not changed
and thus the additional numerical effort does not pay off in comparison with the
inexpensive retraction above.

A major problem, which arises in the discrete case using linear finite elements, is that both the representation of the shape gradient as computed in \eqref{shape_der1} or \eqref{shape_der2} and the normal vector field is discontinuous across element interfaces and can thus not be applied directly as a deformation to the shape.
We therefore solve the following $L^2$-projection to obtain a representation in piece-wise linear basis functions:
\begin{equation}
\int_{\Gi} u v \,ds = \int_{\Gi} \left( \intdt{\left\llbracket k\right\rrbracket \nabla y_1^T\nabla p_2} \right) n v \,ds
\end{equation}
for all linear test-functions $v$ on $\Gi$.
The resulting element-wise linear function $u$ can then be applied as a Dirichlet boundary condition in a linear elasticity equation.
A second Dirichlet condition is chosen to be zero at the outer boundary of $\Omega$ such that the domain keeps its outer shape.
Solving this PDE finally gives a deformation field which can be evaluated in each mesh node and gives a triangulation of the optimized shape without the need of remeshing the domain $\Omega$.

We do not apply a line search strategy in this setting because of the computational cost.
Each descent test in the line search requires the solution of the parabolic PDE in time and additionally the computation of the mesh deformation which includes also a PDE.
Since the resulting step lengths in both the gradient method and BFGS are feasible for this particular setting, a line search is not obligatory.

\begin{figure}
\begin{center}
\includegraphics[width=1\textwidth]{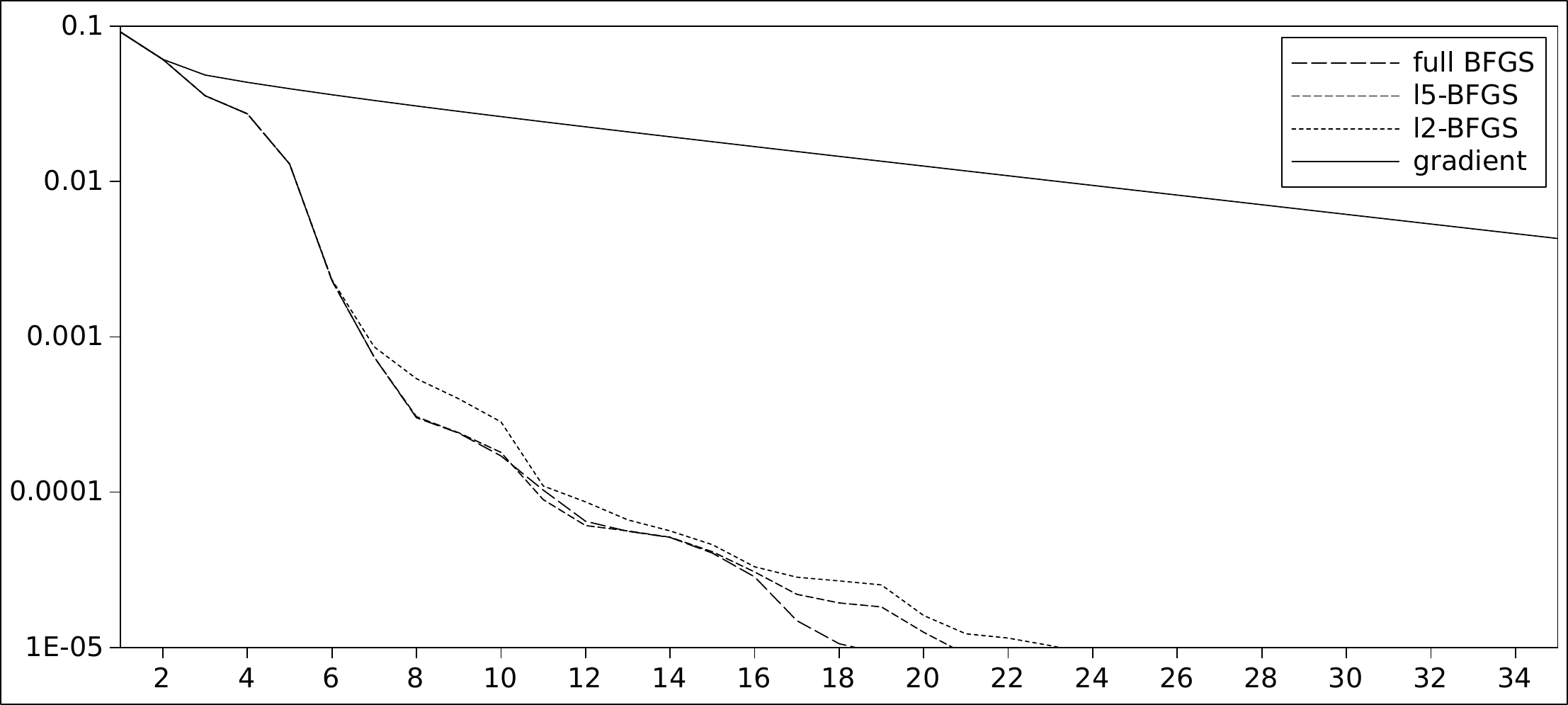}
\end{center}
\caption{\label{fig-convergence-elliptic}Different l-BFGS methods for the elliptic problem}
\end{figure}

\begin{figure}
\begin{center}
\includegraphics[width=1\textwidth]{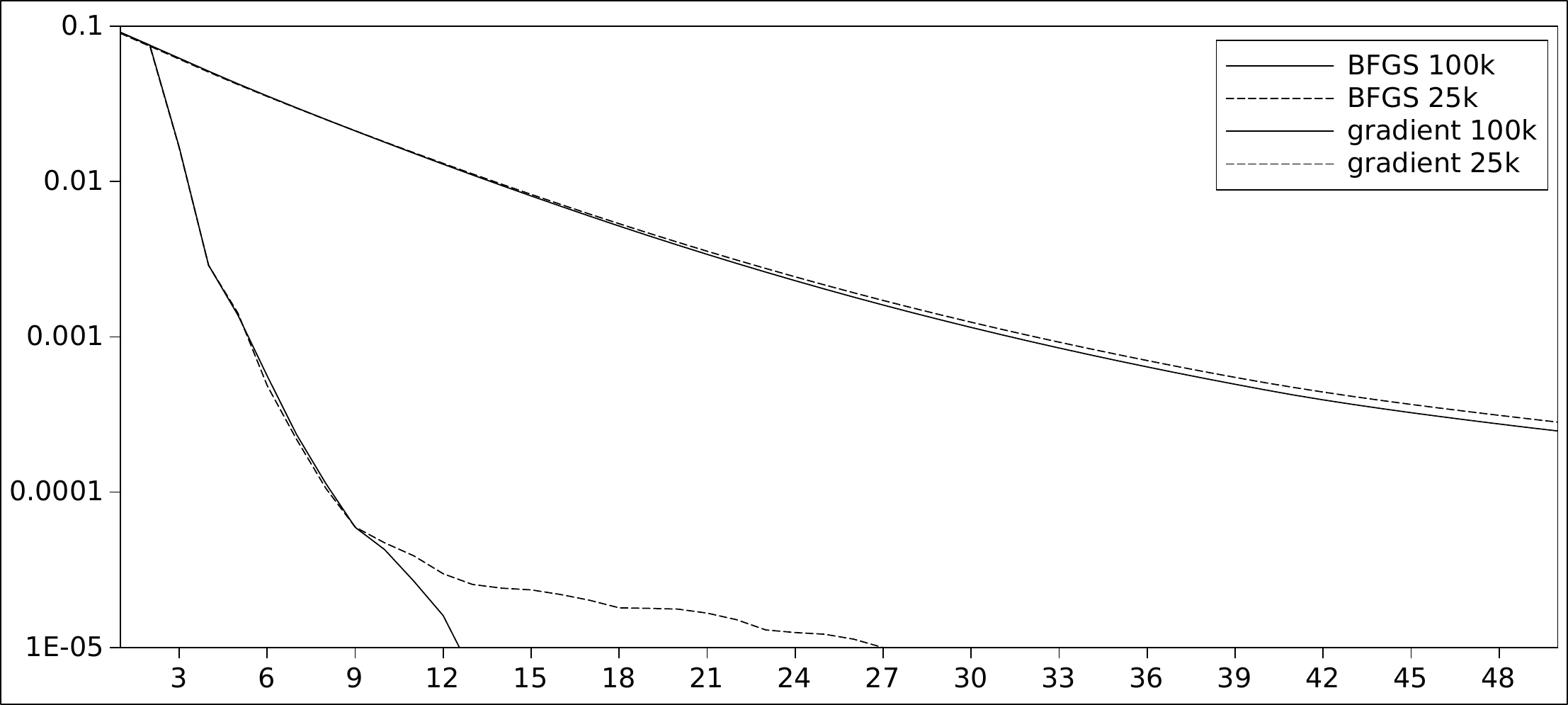}
\end{center}
\caption{\label{fig-different-grids}Comparison of BFGS and gradient method on different grids}
\end{figure}

The measurements of convergence rates ideally has to be performed in terms of the geodesic distance $\delta(c^k,\hat{c})$, where $\hat{c}$ denotes the optimal solution, as specified in \cite{BHM-2011}. However, this would require the computation of the full geodesic connecting the current iterate with the solution, which is a highly expensive operation. Because of the rigidity of the retraction, a first order approximation is
\[
\delta(c_j,\hat{c})\doteq \|\eta\|
\]
where $\eta\in T_{\hat{c}}B_e$ is defined by $c^k=R_{\hat{c}}(\eta)$ and $\|\eta\|=g^1(\eta,\eta)^{1/2}$.

\begin{rev}
In the discrete setting we therefore compute for each node of the iterated shape $c_j$ the shortest distance to $\hat{c}$ in normal direction. We then form the $L^2$-Norm of this distance field over $\hat{c}$, which is used to measure the convergence. It should be mentioned that the cost of this operation is quadratic with respect to the number of nodes on the surface. Starting in one node on $c_j$ in normal direction, the determination of a point of intersection with $\hat{c}$ requires to check all boundary segments. This is the reason why we restrict our numerical results to 2D computations.
\end{rev}


Following this approach, figure \ref{fig-convergence-parabolic} visualizes the convergence history of different BFGS strategies compared to a pure gradient method for problem (\ref{oc2}-\ref{oc6}).
It can clearly be seen that the BFGS methods are superior to the gradient based method.
Furthermore, we partly obtain superlinear convergence in the BFGS case.
It is yet surprising that, in this particular test case, there is hardly any difference between the number of stored gradients in the limited memory BFGS.
This changes for the pure elliptic case of (\ref{oc2}-\ref{oc6}) leaving out the time dependence yielding
\begin{equation}
\begin{aligned}
\min \hspace{0,1cm} J(\Omega):=\intdx{&(y-\bar{y})^2}+\mu\int_{\Gi}1\hspace{.5mm}ds\\
\mbox{s.t. } - \mathrm{div}(k\nabla y)&=f\quad \text{in }\Omega
\\
\hspace{20mm}y&=1\quad \text{on }\Gt
\\
\hspace{20mm}y&=0\quad \text{on }\Gb\
\\
\nd{y}&=0\quad \text{on } \Gb\cup\Gl\cup\Gr
\end{aligned}
\end{equation}
Note that the boundary conditions are changed compared to the parabolic model since these conditions would lead to a homogeneous steady state distribution of $y$.
The shape gradient for this problem can be found \cite{ItoKunisch}.
Here we observe small improvements in the convergence while enlarging the memory width for the BFGS method, which is visualized in figure \ref{fig-convergence-elliptic}.

\begin{rev}
Back in the parabolic case, we also investigate the influence of the grid on the convergence, which is depicted in figure \ref{fig-different-grids}.
Two grids are tested. A coarse one with approximately 25,000 cells and a much finer grid with about 100,000 cells.
It can be seen here that the convergence is almost grid independent for both the gradient and the BFGS method.
This also visualizes the discretization error.
\end{rev}

\begin{figure}
\centering
\subfigure[Optimal shapes\label{fig_optimized_shapes}]{\includegraphics[width=0.45\textwidth]{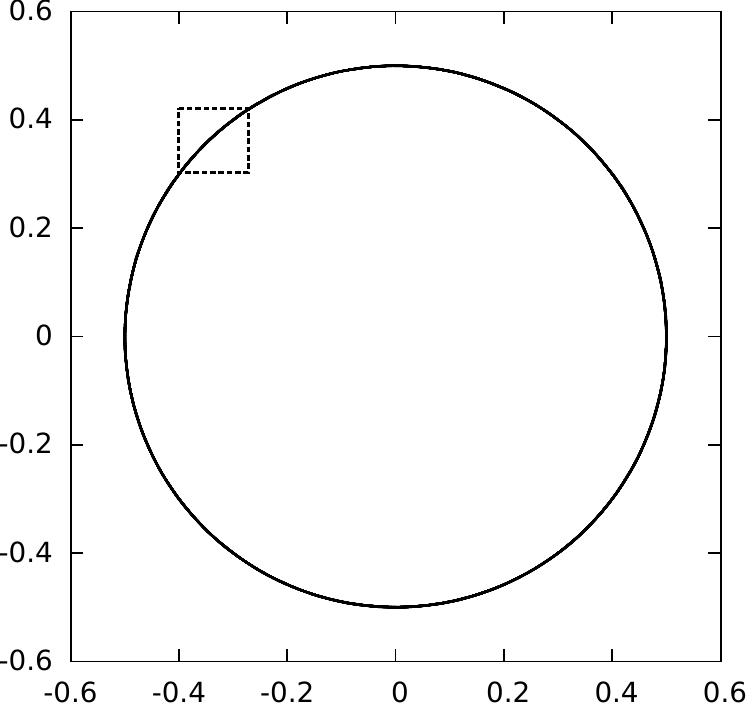}}
\subfigure[Zoom into the dashed frame\label{fig_zoom}]{\includegraphics[width=0.45\textwidth]{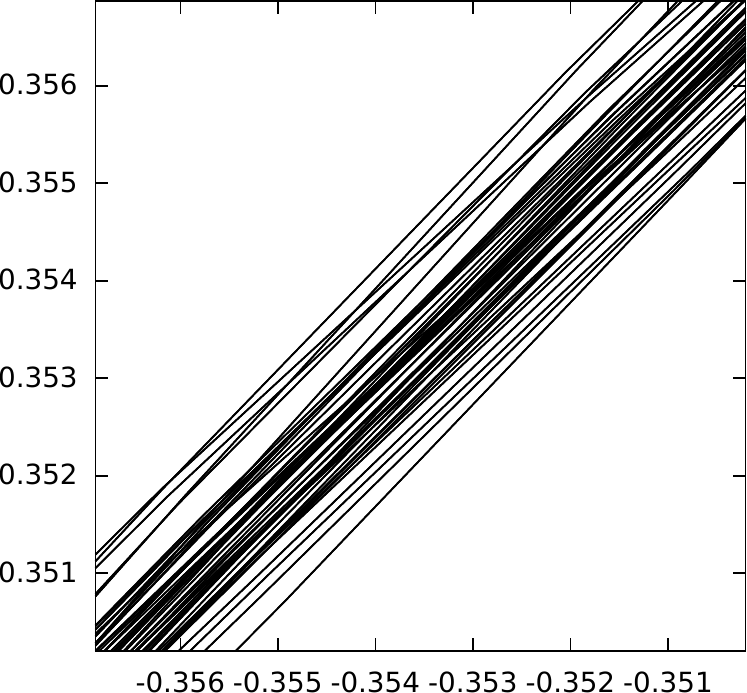}}               
\caption{100 optimized shapes with 5\% noise in measurements $\bar{y}$}
\label{fig_disturbed}
\end{figure}

\begin{rev}
In a final test run we investigate convergence under noisy measurements.
We therefore add white noise $\omega(t,x)$ to the measurements $\bar{y}(t,x)$ with an amplitude of 5\% of the maximum value of $\bar{y}$, which is $1.0$ due to the boundary conditions.
Then we perform 100 runs of the optimization algorithm in the setting described in the beginning of section \ref{numex}.
Due to the disturbed measurements we obtain slightly different optimal shapes.
In order to estimate the difference between these shapes, the maximum, point-wise distance is evaluated.
We observe that this distance is only 0.21\% of the mean diameter of all converged shapes, which is  relatively small compared to the noise added to the measurements.
In figure \ref{fig_optimized_shapes} all 100 converged shapes are visualized. From this point of view there are hardly any differences noticeable. Figure \ref{fig_zoom} shows a zoom into the region framed with dashed lines with approximately 200x magnification. Furthermore, we observed in these experiments that one can also use a regularization parameter $\mu=0$ without a noticeable difference. 
\end{rev}


\section{Conclusions}
This paper develops a novel shape gradient for structured inverse modeling in diffusive processes. The second novelty of this paper lies in the application of quasi-Newton methods in shape space. We observe very fast convergence to the level of the approximation error -- and this without any line-search. These promising results are to be extended to more practically challenging problems in a large-scale framework in subsequent papers.  

\section*{Acknowledgment}
The authors are very grateful for several important suggestions for improvement of the paper mentioned by the two anonymous referees and the editor. This work has been partly supported by the Deutsche Forschungsgemeinschaft within the Priority program SPP 1648 ``Software for Exascale Computing'' under contract number
Schu804/12-1. 
 
\bibliographystyle{plain}

\end{document}